\def\blfootnote{\xdef\@thefnmark{}\@footnotetext}
\numberwithin{equation}{section}
\titleformat{\subsection}[runin]
  {\normalfont\bfseries\filright}{\thesubsection}{0.5 em}{}
\newlist{Aenumerate}{enumerate}{1}
\setlist[Aenumerate]{label=A.\arabic*}
\newcommand{\R}{\mathbb{R}}
\newcommand{\N}{\mathbb{N}}
\newcommand{\C}{\mathbb{C}}
\newcommand{\D}{\mathscr{D}}
\newcommand{\G}{\mathscr{G}}
\DeclarePairedDelimiter{\abs}{\lvert}{\rvert}
\theoremstyle{definition}
\newtheorem*{notazione}{Notation}
\newtheorem{definizione}{Definizione}[section]
\newtheorem{Teorema}[definizione]{Theorem}
\newtheorem{cor}[definizione]{Corollary}
\newtheorem{Lemma}[definizione]{Lemma}
\newtheorem{oss}[definizione]{Remark}
\newcommand*{\sump}{} 
\DeclareRobustCommand*{\sump}{%
    \mathop{{\sum}^{\mathrlap{*}}}%
}
\begin{document}

\thispagestyle{empty}
\begin{center}
\begin{large}\textbf{ON A BASIC MEAN VALUE THEOREM WITH EXPLICIT EXPONENTS}\end{large} \\ \hspace{0.2cm} \\
\begin{small}
\textsc{Matteo Ferrari}
\end{small}
\begin{abstract}
\noindent \textsc{Abstract}. In this paper we follow a paper from A. Sedunova \cite{Sedunova20182} regarding R. C. Vaughan's basic mean value Theorem \cite{Vaughan1980} to improve and complete a more general demonstration for a suitable class of arithmetic functions as started by A. C. Cojocaru and M. R. Murty  \cite{CojocaruMurty2006}. As an application we derive a basic mean value Theorem for the von Mangoldt generalized functions.  \blfootnote{\hspace{-0.55cm} \textit{Date:} \today  \\
2010 \textit{Mathematics Subject Classification}. Primary, 11N35; Secondary, 11N36, 11N37, 11N13. \\ \textit{Key words}. Basic mean value Theorem, Vaughan's identity, large sieve.}
\vspace*{1.5em}
\end{abstract}
\end{center}

\section{Introduction}
In 1980 R. C. Vaughan \cite{Vaughan1980} proved the basic mean value Theorem
\begin{Teorema} \textit{
\begin{equation*}
\sum_{q \le Q} \frac{q}{\phi(q)} \sump_{\chi \bmod q} \max_{y \le x} \biggl \vert \sum_{n \le y} \Lambda(n) \chi(n) \biggr\vert \ll (x + x^{\frac{5}{6}}Q + x^{\frac{1}{2}}Q^2)\log^4x,
\end{equation*}
where $\Lambda$ is the von Mangoldt function and the sum is restricted to primitive characters.}
\end{Teorema}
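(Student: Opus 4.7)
The plan is to combine Vaughan's combinatorial identity for $\Lambda$ with the multiplicative large sieve inequality, following the scheme introduced in \cite{Vaughan1980}. With two free parameters $U, V \ge 1$ (to be set to $U = V = x^{1/3}$ at the end, a choice dictated by balancing the three error terms in the stated bound), I would apply Vaughan's identity to decompose
\begin{equation*}
\Lambda(n) = \Lambda^{\natural}(n) + \Lambda^{\mathrm{I}}(n) + \Lambda^{\mathrm{II}}(n),
\end{equation*}
where $\Lambda^{\natural}$ is supported on $n \le U$, $\Lambda^{\mathrm{I}}$ consists of two ``Type I'' convolutions in which one variable is confined to $[1, UV]$, and $\Lambda^{\mathrm{II}}(n) = \sum_{mk = n,\, m > U,\, k > V} a_m \Lambda(k)$ is a genuine bilinear form with $a_m = \sum_{b \mid m,\, b \le U} \mu(b)$. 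Inserting this into $\sum_{n \le y} \Lambda(n)\chi(n)$ yields three pieces to bound separately.

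\smallskip

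\textbf{Short and Type I pieces.} The short part contributes at most $U Q^2$ after summing trivially over moduli and primitive characters. For the Type I pieces, I would dyadically split the long variable, reduce the inner character sum $\sum_{k \le y/m} \chi(k)\log k$ via partial summation to Pólya--Vinogradov (giving $O(\sqrt{q}\log^2 q)$ for non-principal $\chi$), and then handle the outer sum over $m \le UV$ through Cauchy--Schwarz together with the large sieve. Standard bookkeeping yields a bound $\ll (x + UV \cdot Q^2)\log^{O(1)} x$, which for $UV \le x^{2/3}$ is admissible.

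\smallskip

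\textbf{Type II piece.} This is the core of the argument. I would split the bilinear form dyadically into ranges $M \le m < 2M$, $N \le k < 2N$ with $M \ge U$, $N \ge V$, $MN \le x$, and on each block apply Cauchy--Schwarz in the pair $(q,\chi)$ followed by two applications of the multiplicative large sieve
\begin{equation*}
\sum_{q \le Q} \frac{q}{\phi(q)} \sump_{\chi \bmod q} \Bigl| \sum_{n \le N} c_n \chi(n) \Bigr|^2 \ll (Q^2 + N) \sum_{n \le N} |c_n|^2,
\end{equation*}
one for the $m$-factor and one for the $k$-factor. With $\sum_m |a_m|^2 \ll M \log^{O(1)} x$ and $\sum_k \Lambda(k)^2 \ll N \log N$, each dyadic block is bounded by $\bigl((Q^2 + M)(Q^2 + N)MN\bigr)^{1/2} \log^{O(1)} x$. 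Expanding the square root produces contributions of order $MN$, $Q\sqrt{MN}(\sqrt{M} + \sqrt{N})$, and $Q^2\sqrt{MN}$; summing over the $O(\log^2 x)$ dyadic blocks with $MN \le x$ and $M, N \ge x^{1/3}$ yields precisely $x$, $x^{5/6}Q$, and $x^{1/2}Q^2$, respectively.

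\smallskip

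\textbf{The maximum in $y$ and the obstacle.} To pass from $\sum_{n \le y}$ to $\max_{y \le x}$, I would either represent $\mathbf{1}_{n \le y}$ via Perron's formula and pull the contour inside, or dyadically decompose on $y$ and bring the max inside, at the cost of a single additional $\log x$. The main obstacle is the Type II estimate: one must carefully carry the $q/\phi(q)$ weight (which is naturally compatible with the primitive large sieve) through both applications of Cauchy--Schwarz, and verify that the logarithmic losses accumulated across the short, Type I, and Type II pieces together with the $\max$-reduction combine to no worse than $\log^4 x$, which is tight.
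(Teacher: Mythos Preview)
Your approach is correct and is essentially Vaughan's original 1980 argument. Note, however, that this theorem is stated in the paper only as background (attributed to \cite{Vaughan1980}); the paper does not give its own proof of this exact statement. What the paper does prove is the general Theorem \ref{mainresult}, which in the classical case $f=1$, $g=\log$ specializes not to the $\log^4 x$ bound above but to Sedunova's sharper Corollary \ref{sedunova} and Theorem \ref{BMVTsed} with $\log^2 x$.

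The two arguments share the same skeleton: a Vaughan-type decomposition into a short piece, Type~I pieces, and a Type~II bilinear piece, with the modified multiplicative large sieve (Theorem \ref{CRIV2}, i.e.\ Lemma~2 of \cite{Vaughan1980}) doing the work on the bilinear part after dyadic subdivision (the paper's Lemma \ref{CORFOND}). The genuine differences are: (i) the paper replaces the sharp truncation $\mu_{\le V}$ by the smooth weight $\mu\cdot\eta$ (Lemma \ref{weightedvaughan}), so that hypothesis (H4) --- the Barban--Vehov/Graham estimate \eqref{barb} --- saves a factor $\log^{1/2}(V_2/V_1)$ in the $\ell^2$-norm of $(\mu\cdot\eta)\star 1$ feeding into the large-sieve steps; and (ii) the paper introduces a further split $U_0\le U_1$ of the $S_2$ range, treating the very short part $S_2'$ by P\'olya--Vinogradov alone (via (H3)) and the remaining $S_2''$ by the large sieve. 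Your single Type~I step with P\'olya--Vinogradov plus large sieve is the unrefined version of this; it suffices for $\log^4 x$ but not for the paper's improvement. Your handling of the $\max_{y\le x}$ via Perron or dyadic reduction is exactly what is packaged inside Theorem \ref{CRIV2}.
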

This result was a major tool for R. C. Vaughan to prove with elementary methods the Bombieri-Vinogradov Theorem. Recently A. Sedunova \cite{Sedunova20182} improved the exponent of the logarithm using a weighted version of Vaughan's identity and an estimate due to M. B. Barban and P. P. Vehov \cite{BarbanVehov1968} related to Selberg's sieve. 
A. C. Cojocaru and M. R. Murty in \cite{CojocaruMurty2006} proved a more general Theorem than the basic mean value Theorem. We will follow their proof improving the results adapting Sedunova's method. 
Using the main Theorem \ref{mainresult} we will be able to prove a basic mean value Theorem for the generalized von Mangoldt function $\Lambda_k = \mu \star \log^k$, precisely
\begin{Teorema} \textit{ For each $k \in \N$, $\epsilon>0$ it holds
\begin{equation*}
\sum_{q \le Q} \frac{q}{\phi(q)} \sump_{\chi \bmod q} \max_{y \le x} \Bigl\vert \sum_{n \le y} \Lambda_k(n) \chi(n) \Bigr\vert \ll_k \bigl(x + x^{\frac{13}{14}+\epsilon}Q + x^{\frac{1}{2}}Q^2 \bigr) \log^{k+1}x.
\end{equation*}}
\end{Teorema}

\begin{notazione}
Given $A \subset \R$, with $\mathds{1}_{A}$ we denote the characteristic function of $A$, when we write $\mathds{1}$ we suppose $A = \{1\}$. Given an arithmetic function $f : \N \to \C$ and two real numbers $U<V$, we write $f_{\le U}$ for $f \cdot \mathds{1}_{[1,U]}$, $f_{>V}$ for $f \cdot (1 - \mathds{1}_{[1,V]})$ and with $f_{(U,V]}$ for $f \cdot \mathds{1}_{(U,V]}$. We use the standard Vinogradov notation $\ll$ and when the implicit constant does depend on something we specify it. The quantities $Q, M_1, M_2, N_1, N_2$ are always some functions that depend on $x$, when we use the $\ll$ notation we assume $x \to +\infty$. 
\end{notazione}

\section{Main result}
Let us indicate the class of arithmetic functions
\begin{equation} \label{defD}
\mathscr{D} = \Bigl\{ D:\N \to \C : \sum_{n \le x} \abs*{D(n)}^2 \ll x\log^{\alpha}x \, \,  \, \textit{for some} \hspace{0.13cm} \alpha \ge 0 \Bigr\}
\end{equation}
and for $D \in \mathscr{D}$ let
\begin{equation} \label{alpha}
\alpha_D = \inf \biggl\{ \alpha \ge 0 : \sum_{n \le x} \abs*{D(n)}^2 \ll x \log^{\alpha}x\biggr\}.
\end{equation}
We will need also information about the average of $\abs*{D(n)}/n^k$ for $k \in [0,1]$. Let us indicate
\begin{equation} \label{beta}
\beta_D(k) = \inf \biggl\{ \beta \ge 0 : \sum_{n \le x} \frac{\abs*{D(n)}}{n^{k}} \ll_k x^{1-k} \log^{\beta}x \biggr\}.
\end{equation}
It is straightforward that if $D \in \D$ then $\beta_D(k) < + \infty$ for all $k \in [0,1]$, we will give a precise bound in Lemma \ref{prop12}. 

From now on we consider two arithmetic functions $f, g : \N \to \C$ with $f(1) \ne 0$. We define $\mu_f, \Lambda_{fg}$ as
\begin{align} 
 \label{convo1} \mathds{1} & = \mu_f \star f,   \\
\label{convo2} \Lambda_{fg} & = \mu_f \star g. \end{align}
In particular $\mu_f$ is the convolution inverse of $f$: it exists and is unique since $f(1) \ne 0$.
We can understand better these definitions with the help of the associated formal Dirichlet series: if
\begin{equation*}
G(s) = \sum_{n \ge 1} \frac{g(n)}{n^s}, \quad F(s) = \sum_{n \ge 1} \frac{f(n)}{n^s};
\end{equation*} 
then
\begin{equation*}
\frac{G(s)}{F(s)} = \sum_{n \ge 1} \frac{\Lambda_{fg}(n)}{n^s}, \quad \frac{1}{F(s)} = \sum_{n \ge 1} \frac{\mu_f(n)}{n^s}.
\end{equation*}
The benchmark case is clearly when
\begin{equation*}
f=1, \quad g=\log,\quad \mu_f=\mu, \quad \Lambda_{fg}=\Lambda.
\end{equation*}

We are interested in estimates for
\begin{equation*}
\sum_{q \le Q} \frac{q}{\phi(q)} \sump_{\chi \bmod q} \max_{y \le x} \Bigl\vert \sum_{n \le y} \Lambda_{fg}(n) \chi(n) \Bigr \vert.
\end{equation*}
We have two trivial bounds. Using the triangle inequality we obtain, for each $\epsilon>0$,
\begin{align}
\sum_{q \le Q} \frac{q}{\phi(q)} \sump_{\chi \bmod q} \max_{y \le x} \Bigl\vert \sum_{n \le y} \Lambda_{fg}(n) \chi(n) \Bigr \vert & \le
\sum_{q \le Q} q \,  \sum_{n \le x} \abs*{\Lambda_{fg}(n)} \nonumber \\ & \ll x Q^2 \log^{\beta_{\Lambda_{fg}}(0)+\epsilon}x.  \label{triv1}
\end{align}
Using the Cauchy-Schwarz inequality we obtain, for each $\epsilon>0$,
\begin{align}
\sum_{q \le Q} \frac{q}{\phi(q)} \sump_{\chi \bmod q} \max_{y \le x} \Bigl\vert \sum_{n \le y} \Lambda_{fg}(n) \chi(n) \Bigr \vert \nonumber & \le
\sum_{q \le Q} q \, \Bigl( \sum_{n \le x} \abs*{\Lambda_{fg}(n)}^2 \Bigr)^{\frac{1}{2}}\Bigl( \sum_{n \le x} 1 \Bigr)^{\frac{1}{2}} \\ & \ll x Q^2 \log^{\frac{\alpha_{\Lambda_{fg}}}{2}+\epsilon}x. \label{triv2}
\end{align}
We can improve these inequalities assuming further hypotheses for $f, g,\mu_f$ and $\Lambda_{fg}$.
\begin{Teorema} \label{mainresult}
 \textit{
We suppose that $g, f, \mu_f$ and $\Lambda_{fg}$, as defined before, satisfy the following hypotheses:
\begin{enumerate}[leftmargin=*,label=\textsc{(H\arabic*)}]
\item \label{H1} $ g : \N \to \R^+$ is an increasing function;
\item \label{H2} $f, \, \mu_f, \, \Lambda_{fg} \, \in \mathscr{D}$;
\item \label{H3} there exist $\theta_f, \gamma_f \in [0,1]$ such that, for any non-principal primitive Dirichlet character $\chi \bmod q$
\begin{equation*}
\sum_{n \le x} f(n) \chi(n) \ll x^{\theta_f}q^{\frac{1}{2}} \log q + x^{\gamma_f};
\end{equation*}
\item \label{H4} for each $1 \le V_1 < V_2 $ there exists a bounded function $\eta(b)=\eta(b;V_1,V_2)$ such that $\eta(b)=1$ for $b \le V_1$, $\eta(b)=0$ for $b > V_2$ and 
\begin{equation*}
\sum_{n=1}^V \Bigl\vert \bigl((\mu_f \cdot \eta) \star f \bigr)(n) \Bigr\vert^2 \ll \frac{V} {\log (\frac{V_2}{V_1})}.
\end{equation*} 
\end{enumerate}
Then for each $\epsilon>0$, $U_0 \le U_1, V_1 < V_2$ it holds
\begin{equation*}
\sum_{q \le Q} \frac{q}{\phi(q)} \sump_{\chi \bmod q} \max_{y \le x} \Bigl\vert \sum_{n \le y} \Lambda_{fg}(n) \chi(n) \Bigr\vert \ll H(x,Q,U_0,U_1,V_1,V_2)
\end{equation*}
and we have
\begin{align*}
H(x,Q,U_0,U_1,V_1,V_2) & \ll  U_1Q^2 \log^{\beta_{\Lambda_{fg}}(0)+\epsilon}U_1 \\
 & + x^{\theta_f} (U_0V_2)^{1-\theta_f}Q^{\frac{5}{2}} \log^{\beta_{\mu_f}(\theta_f) + \beta_{\Lambda_{fg}}(\theta_f) + 1+\epsilon}(U_0V_2Q) \\ & + x^{\gamma_f} (U_0V_2)^{1-\gamma_f}  Q^2 \log ^{\beta_{\mu_f}(\gamma_f) + \beta_{\Lambda_{fg}}(\gamma_f)+\epsilon}(U_0V_2) \\ & + x  \log^{\beta_f(0) + \beta_{\mu_f}(1) + \beta_{\Lambda_{fg}}(1)+\epsilon}(xU_0V_2) \\
& + \biggl(\bigl(x^{\frac{1}{2}}Q^2 + x\bigr)\log U_1 + x^{\frac{1}{2}}Q\Bigl(U_1^{\frac{1}{2}} + \frac{x^{\frac{1}{2}}}{U_0^{\frac{1}{2}}}\Bigr)\biggr)\frac{\log^{\frac{\alpha_{\Lambda_{fg}}}{2}+1+\epsilon}x}{\log^{\frac{1}{2}}(\frac{V_2}{V_1})} \\ 
 & + g(x) V_2 Q^{\frac{5}{2}} \log^{\beta_{\mu_f}(0)+1+\epsilon}(V_2Q) +  g(x) x \log^{\beta_{\mu_f}(1)+\epsilon}V_2 \\
 & + \biggl(\bigl(x^{\frac{1}{2}}Q^2 + x\bigr)\log{x} + xQ\Bigl(\frac{1}{V_1^{\frac{1}{2}}} + \frac{1}{U_1^{\frac{1}{2}}}\Bigr)\biggr)\frac{\log^{\frac{\alpha_{\Lambda_{fg}}}{2}+1+\epsilon}x}{\log^{\frac{1}{2}}(\frac{V_2}{V_1})}.
\end{align*}
In particular if all the $\alpha$ and $\beta$ reach the minima in definitions \ref{alpha} and \ref{beta}, then the claim holds with $\epsilon=0$.}
\end{Teorema}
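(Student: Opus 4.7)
The plan is to adapt Sedunova's weighted Vaughan identity to the generalized setting. Using $\mathds{1} = \mu_f \star f$ (so that $g = \Lambda_{fg}\star f$), the weight $\eta = \eta(\,\cdot\,;V_1,V_2)$ supplied by hypothesis \ref{H4}, and the splittings $\mu_f = \mu_f\eta + \mu_f(1-\eta)$ and $\Lambda_{fg} = (\Lambda_{fg})_{\le U_0} + (\Lambda_{fg})_{(U_0,U_1]} + (\Lambda_{fg})_{>U_1}$, a sequence of convolution manipulations yields the weighted identity
\begin{align*}
\Lambda_{fg} = \ & (\Lambda_{fg})_{\le U_1} + (\mu_f\eta) \star g - (\mu_f\eta) \star (\Lambda_{fg})_{\le U_0} \star f \\
& - (\mu_f\eta) \star (\Lambda_{fg})_{(U_0,U_1]} \star f + (\mu_f(1-\eta)) \star (\Lambda_{fg})_{>U_1} \star f,
\end{align*}
which breaks the target sum into a short piece, a piece involving $g$, one Type~I piece, and two Type~II bilinear pieces.

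The first three contributions are estimated as follows. The short piece $(\Lambda_{fg})_{\le U_1}$ is handled by the trivial bound via the definition of $\beta_{\Lambda_{fg}}(0)$, producing the first summand of $H$. For $(\mu_f \eta) \star g$, the monotonicity in \ref{H1} permits Abel summation, reducing the inner sum to the P\'olya--Vinogradov bound $q^{1/2}\log q$ for non-principal primitive characters (yielding the $g(x)V_2Q^{5/2}$ contribution) and to a trivial bound for the trivial character (yielding $g(x)x\log^{\beta_{\mu_f}(1)+\epsilon}V_2$). For the Type~I piece $(\mu_f\eta) \star (\Lambda_{fg})_{\le U_0} \star f$, group the outer coefficients (supported on indices $a \le U_0 V_2$) and apply \ref{H3} to the remaining $f$-character sum; the two regimes of \ref{H3} produce the $x^{\theta_f}$- and $x^{\gamma_f}$-terms, while the $q=1$ diagonal contributes the $x \log^{\beta_f(0)+\beta_{\mu_f}(1)+\beta_{\Lambda_{fg}}(1)+\epsilon}(xU_0V_2)$ summand.

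The two Type~II bilinear sums are the heart of the argument. Exploiting $\mu_f(1-\eta) \star f = \mathds{1} - (\mu_f\eta)\star f$ (valid for $n \ge 2$), hypothesis \ref{H4} furnishes the $\ell^2$ bound $\ll V/\log(V_2/V_1)$ on the relevant convolutional factors. After a dyadic decomposition on the $\Lambda_{fg}$-variable (which supplies the $\log U_1$ and $\log x$ factors), I would apply the multiplicative large sieve inequality in its bilinear form, together with the standard Fourier device to accommodate $\max_y$. The resulting bounds take the shape $\bigl((x^{1/2}Q^2+x)\log + x^{1/2}Q \cdot (\text{length})^{1/2}\bigr)\cdot(\ell^2\text{-norms})$, with the Barban--Vehov savings $\log^{-1/2}(V_2/V_1)$ entering from \ref{H4} and the factor $\log^{\alpha_{\Lambda_{fg}}/2}$ from the $\ell^2$ norm of $\Lambda_{fg}$; the two natural orderings of the bilinear variables then produce the two shapes appearing as the fifth and seventh summands of $H$.

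The principal obstacle will be arranging the weighted identity and the subsequent dyadic decomposition so that the two Type~II pieces expose exactly the quantity controlled by \ref{H4}, while the Type~I estimates remain clean and the short summand does not swell beyond $U_1 Q^2$. A secondary technical point is the uniform tracking of the $\alpha, \beta$ exponents through all the convolutions, with every implicit constant stated sharply, so that the $\epsilon$ can be dropped whenever the infima in definitions \eqref{alpha} and \eqref{beta} are attained.
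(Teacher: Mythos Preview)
Your proposal is correct and follows essentially the same approach as the paper: the same weighted Vaughan identity (with the further $U_0/U_1$ split of the $\Lambda_{fg}$ variable), the same treatment of the short piece, the $g$-piece via partial summation and P\'olya--Vinogradov, the Type~I piece via \ref{H3}, and the two Type~II pieces via a dyadic decomposition combined with the bilinear large sieve and the Barban--Vehov saving from \ref{H4}. The only item you did not mention is the paper's preliminary disposal of the range $Q^2>x$ by a single application of the large sieve, but this is cosmetic since the resulting term $x^{1/2}Q^2\log^{\alpha_{\Lambda_{fg}}/2+1+\epsilon}x$ is already present in $H$.
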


\begin{cor} \label{mainresultcor} \textit{Assuming the same hypotheses as in Theorem \ref{mainresult}
\begin{equation*}
\sum_{q \le Q} \frac{q}{\phi(q)} \sump_{\chi \bmod q} \max_{y \le x} \Bigl\vert \sum_{n \le y} \Lambda_{fg}(n) \chi(n) \Bigr\vert \ll ML
\end{equation*}
where $M$ is the main term and $L$ is the logarithmic term, precisely
\begin{align*}
 M = \max  \biggl\{ & U_1Q^2, \, x^{\theta_f}(U_0V_2)^{1-\theta_f}Q^{\frac{5}{2}},\, x^{\gamma_f}(U_0V_2)^{1-\gamma_f}Q^2, \, x, \, x^{\frac{1}{2}}Q^2, \\ & \frac{xQ}{U_0^{\frac{1}{2}}}, \,x^{\frac{1}{2}}U_1^{\frac{1}{2}}Q, \,V_2Q^{\frac{5}{2}}, \,\frac{xQ}{V_1^{\frac{1}{2}}}\biggr\}
\end{align*}
and
\begin{align*}
L = \max \biggl\{ & \log^{\beta_{\Lambda_{fg}}(0)+\epsilon}U_1, \, 
 \log^{\beta_{\mu_f}(\theta_f) + \beta_{\Lambda_{fg}}(\theta_f) + 1+\epsilon}(U_0V_2Q), \\ & \log^{\beta_{\mu_f}(\gamma_f) + \beta_{\Lambda_{fg}}(\gamma_f)+\epsilon}(U_0V_2), 
  \log^{\beta_f(0) + \beta_{\mu_f}(1) + \beta_{\Lambda_{fg}}(1)+\epsilon}(xU_0V_2), 
  \\ & \frac{\log^{\frac{\alpha_{\Lambda_{fg}}}{2}+2+\epsilon}(xU_1)}{\log^{\frac{1}{2}}(\frac{V_2}{V_1})}, \, g(x)\log^{\beta_{\mu_f}(0)+1+\epsilon}(V_2Q), \, g(x)\log^{\beta_{\mu_f}(1)+\epsilon}V_2, \\ & \frac{\log^{\frac{\alpha_{\Lambda_{fg}}}{2}+2+\epsilon}x}{\log^{\frac{1}{2}}(\frac{V_2}{V_1})}  \biggr\}.
\end{align*}}
\end{cor}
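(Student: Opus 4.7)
The plan is to compare the bound on $H(x,Q,U_0,U_1,V_1,V_2)$ supplied by Theorem \ref{mainresult} with the product $ML$ term by term. The key observation is that $H$ is a sum of a \emph{bounded} number of summands (eight displayed lines, two of which split into four subterms each after expanding the brackets on lines five and eight), and every one of these summands factors as an algebraic expression in $x,Q,U_0,U_1,V_1,V_2$ (possibly multiplied by $g(x)$) times a purely logarithmic factor. I would verify that every algebraic factor appearing in $H$ is dominated by one of the entries of the maximum defining $M$, and that the corresponding logarithmic factor is dominated by one of the entries of $L$. Summing $O(1)$ such bounds then yields $H \ll ML$.

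For the clean lines one through four this is a pure transcription: the factors $U_1Q^2$, $x^{\theta_f}(U_0V_2)^{1-\theta_f}Q^{5/2}$, $x^{\gamma_f}(U_0V_2)^{1-\gamma_f}Q^2$ and $x$ already appear in $M$, with their logarithmic counterparts already in $L$. Lines six and seven contribute $V_2Q^{5/2}$ and $x$ to $M$, while the factor $g(x)$ is absorbed into the corresponding entries of $L$ (this is precisely why $g(x)$ appears in $L$ and not in $M$).

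The two compound lines (five and eight) require a little manipulation. Expanding the brackets on line five produces four summands whose algebraic parts are $x^{1/2}Q^2$, $x$, $x^{1/2}U_1^{1/2}Q$ and $xQ/U_0^{1/2}$, all of which are listed in $M$; the companion logarithmic factors are either $\log U_1 \cdot \frac{\log^{\alpha_{\Lambda_{fg}}/2+1+\epsilon}x}{\log^{1/2}(V_2/V_1)}$ or the same expression without the $\log U_1$, and both are bounded by $\frac{\log^{\alpha_{\Lambda_{fg}}/2+2+\epsilon}(xU_1)}{\log^{1/2}(V_2/V_1)}$ by using $\log U_1 \le \log(xU_1)$ and $\log x \le \log(xU_1)$, matching the fifth entry of $L$. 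Line eight is analogous, with $\log x$ in place of $\log U_1$ and algebraic parts $x^{1/2}Q^2$, $x$, $xQ/V_1^{1/2}$, $xQ/U_1^{1/2}$; the logarithmic factor becomes $\frac{\log^{\alpha_{\Lambda_{fg}}/2+2+\epsilon}x}{\log^{1/2}(V_2/V_1)}$, which is the final entry of $L$.

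There is no real obstacle here; the whole argument is finite bookkeeping. The one non-automatic step to watch is the fate of $xQ/U_1^{1/2}$ arising on line eight, which is deliberately \emph{not} in the list $M$: one invokes the standing hypothesis $U_0 \le U_1$ of Theorem \ref{mainresult} to write $xQ/U_1^{1/2} \le xQ/U_0^{1/2}$, and the latter does lie in $M$. Once that absorption is noted the corollary follows by summing the finitely many contributions.
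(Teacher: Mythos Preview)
Your proposal is correct and matches the paper's approach: the paper does not spell out a separate proof of the corollary, treating it as immediate bookkeeping from the display for $H$ in Theorem~\ref{mainresult}, with only a remark at the end of the main proof that $g(x)$ has been absorbed into $L$ rather than $M$. Your write-up simply makes this bookkeeping explicit, including the one non-automatic absorption $xQ/U_1^{1/2}\le xQ/U_0^{1/2}$ via the standing hypothesis $U_0\le U_1$.
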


\section{Preparation for the proof}
First we prove a Lemma that guarantees us that if $D \in \mathscr{D}$ then $\beta_D(k)$ is bounded for all $k \in [0,1]$. 
\begin{Lemma}  \label{prop12}
\textit{ If $D \in \mathscr{D}$ then
\begin{equation*} 
\beta_D(k) \le \frac{\alpha_D}{2} + \mathds{1}(k).
\end{equation*}
}
\end{Lemma}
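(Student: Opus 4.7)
The plan is to apply Cauchy--Schwarz after a dyadic decomposition of the sum $\sum_{n \le x} |D(n)|/n^k$. Fix $\epsilon > 0$; since $\alpha_D$ is defined as an infimum in \eqref{alpha}, one has $\sum_{n \le N} |D(n)|^2 \ll N \log^{\alpha_D + \epsilon} N$ for every sufficiently large $N$. The goal is then to show $\sum_{n \le x} |D(n)|/n^k \ll_k x^{1-k} \log^{(\alpha_D + \epsilon)/2 + \mathds{1}(k)} x$ and let $\epsilon \to 0^+$, which is exactly what the infimum in \eqref{beta} requires.

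I would first split the range $n \in [1,x]$ into the dyadic intervals $(N/2, N]$ with $N$ running through $x, x/2, x/4, \ldots$ down to some $N = O(1)$. On each such interval bound $n^{-k}$ above by $(2/N)^{k}$ and apply Cauchy--Schwarz, which gives
\[
\sum_{N/2 < n \le N} \frac{|D(n)|}{n^k} \ll N^{-k} \cdot N^{1/2}\biggl( \sum_{n \le N} |D(n)|^2\biggr)^{\!1/2} \ll N^{1-k} \log^{(\alpha_D + \epsilon)/2} N.
\]

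The next step is to sum this dyadic bound over all the scales, and this is exactly where the shape of the estimate splits into two cases. For $k \in [0,1)$ the factors $N^{1-k}$ form a convergent geometric series bounded by $O_k(x^{1-k})$, and one may pull the logarithm out at its maximum value $\log^{(\alpha_D+\epsilon)/2} x$, producing $x^{1-k} \log^{(\alpha_D+\epsilon)/2} x$. For $k = 1$ the geometric factor $N^{0}=1$ is constant, so the $O(\log x)$ dyadic scales each contribute $\log^{(\alpha_D+\epsilon)/2} x$, and summing yields an extra logarithmic factor $\log^{(\alpha_D+\epsilon)/2 + 1} x$. Both cases are compactly expressed by the $\mathds{1}(k)$ term in the statement.

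I do not expect any real obstacle here; the only subtle point is that the case $k=1$ (where $\sum_n 1/n$ diverges) is precisely what forces the additional $\mathds{1}(k)$ in the exponent, so the case split must be kept explicit. Everything else is routine, and after letting $\epsilon \to 0^+$ one obtains the asserted bound on the infimum $\beta_D(k)$.
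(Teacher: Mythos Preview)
Your argument is correct. The paper proceeds slightly differently: instead of a dyadic decomposition it applies partial summation to write
\[
\sum_{n \le x} \frac{|D(n)|}{n^{k}} = \frac{1}{x^k}\sum_{n \le x}|D(n)| + k\int_1^x \Bigl(\sum_{n \le t}|D(n)|\Bigr)\frac{dt}{t^{k+1}},
\]
then uses Cauchy--Schwarz on the partial sums $\sum_{n \le t}|D(n)|$ and evaluates $\int_1^x t^{-k}\,dt$, which is where the dichotomy $k<1$ versus $k=1$ appears. Your dyadic decomposition plays the same structural role as the integral here; both routes reduce the problem to the $\ell^2$ bound via Cauchy--Schwarz and isolate the extra $\log x$ exactly at $k=1$. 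The two arguments are essentially interchangeable, with the dyadic version being a discrete analogue of the partial-summation version; neither offers a real advantage over the other in this setting.
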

\begin{proof}
This follows easily using partial summation and the Cauchy-Schwarz inequality, 
\begin{align*}
\sum_{n \le x} \frac{\abs*{D(n)}}{n^{k}} & = \frac{1}{x^k} \sum_{n \le x} \abs*{D(n)} + k \int_1^x \Bigl(\sum_{n \le t} \abs*{D(n)} \Bigr) \frac{dt}{t^{k+1}} 
 \\ & \ll_k  \frac{1}{x^k} \Bigl( \sum_{n \le x} 1\Bigr)^{\frac{1}{2}} \Bigl(\sum_{n \le x} \abs*{D(n)}^2 \Bigr)^{\frac{1}{2}} +  \int_1^x \Bigl(\sum_{n \le t} 1\Bigr)^{\frac{1}{2}} \Bigl(\sum_{n \le t} \abs*{D(n)}^2 \Bigr)^{\frac{1}{2}} \frac{dt}{t^{k+1}} 
\\ & \ll_k x^{1-k} \log^{\frac{\alpha_D}{2}+\epsilon}x + \log^{\frac{\alpha_D}{2}+\epsilon}x \int_1^x \frac{dt}{t^k}, 
\end{align*}
for each $\epsilon>0$. So we have the claim distinguishing $k=1$ from the other cases.
\end{proof}
This is typically far from the best exponent, for example $\Lambda \in \mathscr{D}$ with $\alpha_{\Lambda} = 1$, Lemma \ref{prop12} provides us the bound $\beta_{\Lambda}(0) \le 1/2$ but the prime number Theorem claims that $\beta_{\Lambda}(0)=0$. Another example rises from Mertens' formula 
\begin{equation*}
\sum_{n \le x} \frac{\Lambda(n)}{n} = \log x + O(1)
\end{equation*}
and so $\beta_{\Lambda}(1) = 1$ but with the Lemma \ref{prop12} we can only obtain $\beta_{\Lambda}(1) \le 3/2$. However with our kind of generalization it can't be done better than Lemma \ref{prop12}, for example the function identically $1$ is in $\D$ with $\alpha_1=0$, $\beta_1(0)=0$ and $\beta_1(1)=1$.

As in the classic proof of the basic mean value Theorem we need a modified multiplicative large sieve inequality.
\begin{Teorema} \label{CRIV2}
\textit{Let $f_1, f_2$ be two arithmetic function, then
\begin{align*}
& \sum_{q \le Q} \frac{q}{\phi(q)} \sump_{\chi \bmod q} \max_{y \le M_1M_2} \Bigl\vert \sum_{n \le y} ({f_1}_{\le M_1} \star {f_2}_{\le M_2})(n) \chi(n) \Bigr\vert \\ &  \ll (Q^2+M_1)^{\frac{1}{2}}(Q^2+M_2)^{\frac{1}{2}} \Bigl( \sum_{n \le M_1}  \abs*{f_1(n)}^2 \Bigr)^{\frac{1}{2}} \Bigl(\sum_{n \le M_2} \abs*{f_2(n)}^2 \Bigr)^{\frac{1}{2}} \log{(M_1M_2)}.
\end{align*}}
\end{Teorema}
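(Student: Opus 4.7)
The plan is to reduce the statement to the bilinear (no-max) multiplicative large sieve
$$\sum_{q\le Q}\frac{q}{\phi(q)}\sump_{\chi\bmod q}\Bigl|\sum_{m\le M}\alpha_m\chi(m)\Bigr|\Bigl|\sum_{n\le N}\beta_n\chi(n)\Bigr|\ll(M+Q^2)^{1/2}(N+Q^2)^{1/2}\|\alpha\|_2\|\beta\|_2,$$
itself a consequence of Cauchy--Schwarz in $(q,\chi)$ followed by two applications of the classical multiplicative large sieve $\sum_q(q/\phi(q))\sump_\chi|\sum_{n\le N}c_n\chi(n)|^2\ll(N+Q^2)\sum_n|c_n|^2$. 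Expanding the convolution gives
$$\sum_{n\le y}\bigl((f_1)_{\le M_1}\star(f_2)_{\le M_2}\bigr)(n)\chi(n)=\sum_{m_1\le M_1}\sum_{m_2\le M_2}\mathds{1}_{m_1m_2\le y}\,f_1(m_1)f_2(m_2)\chi(m_1)\chi(m_2),$$
and the genuine difficulty is that the constraint $m_1m_2\le y$ together with the outer maximum in $y$ couples the two variables and blocks a direct factorisation.

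To decouple $m_1$ and $m_2$ I would use a truncated Perron-type identity
$$\mathds{1}_{m_1m_2\le y}=\frac{1}{2\pi i}\int_{c-iT}^{c+iT}\Bigl(\frac{y}{m_1m_2}\Bigr)^s\frac{ds}{s}+E(m_1,m_2,y),$$
with $c=1/\log(M_1M_2)$ and $T$ a suitable polynomial power of $M_1M_2$. After substitution and interchanging summation with integration, the main term reads
$$\frac{1}{2\pi i}\int_{c-iT}^{c+iT}F_1(s,\chi)F_2(s,\chi)\,\frac{y^s}{s}\,ds,\qquad F_i(s,\chi)=\sum_{m\le M_i}f_i(m)\chi(m)m^{-s},$$
so that the $y$-dependence is confined to the factor $y^s$ of absolute value $y^c\ll 1$, which means the maximum over $y\le M_1M_2$ can simply be absorbed inside the integral.

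Summing over $(q,\chi)$, interchanging with the integral, and applying the bilinear multiplicative large sieve above to the coefficients $\tilde f_i(m)=f_i(m)m^{-c-it}$ (which satisfy $\|\tilde f_i\|_2\le\|f_i\|_2$ since $|m^{-c-it}|\le 1$) produces, for each fixed $t$,
$$\sum_{q\le Q}\frac{q}{\phi(q)}\sump_{\chi\bmod q}|F_1(c+it,\chi)F_2(c+it,\chi)|\ll(M_1+Q^2)^{1/2}(M_2+Q^2)^{1/2}\|f_1\|_2\|f_2\|_2.$$
The remaining integration against $|c+it|^{-1}$ over $|t|\le T$ contributes $\asymp\log(T/c)\asymp\log(M_1M_2)$, which is precisely the single logarithmic factor appearing in the statement.

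The principal technical obstacle is the uniform control of the Perron error $E(m_1,m_2,y)$ near the boundary $m_1m_2\approx y$, where the usual bound $y^c/(T(m_1m_2)^c|\log(y/(m_1m_2))|)$ degenerates. The standard remedy is either to replace the sharp indicator by a smooth cut-off of width a small power of $M_1M_2$ (whose Mellin transform decays rapidly enough to kill the tails of the integral), or to detach the near-boundary contribution $m_1m_2\in[y(1-\delta),y(1+\delta)]$ and estimate it trivially by $\sum|f_1(m_1)f_2(m_2)|$ on that short range; balancing $\delta$ and $T$ then absorbs the error into the main bound without enlarging the exponent of $\log$.
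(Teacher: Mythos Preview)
The paper does not give its own proof of this theorem but simply refers the reader to Lemma~2 of Vaughan~\cite{Vaughan1980}; your Perron-formula reduction to the bilinear multiplicative large sieve is precisely the argument used there, and your sketch captures the essential steps correctly, including the source of the single logarithmic factor from $\int_{-T}^{T}|c+it|^{-1}\,dt$. The treatment of the truncation error that you flag as the main technical point is handled in Vaughan's paper exactly along the lines you describe, so your proposal matches the cited proof.
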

For the proof see Lemma 2 of \cite{Vaughan1980}. 
If we have to estimate sums like
\begin{align*}
& \sum_{q \le Q} \frac{q}{\phi(q)} \sump_{\chi \bmod q} \max_{y \le x} \Bigl\vert \sum_{n \le y} ({f_1}_{(N_1,M_1]} \star f_2)(n) \chi(n) \Bigr\vert,\end{align*}
with $f_1, f_2 \in \D$ and $M_1/N_1 \ll x$, using directly Theorem \ref{CRIV2} is not in general convenient. Indeed writing 
\begin{align*}
\max_{y \le x} \Bigl\vert \sum_{n \le y} ({f_1}_{(N_1,M_1]} \star f_2)(n) \chi(n) \Bigr\vert = \max_{y \le x} \Bigl\vert \sum_{n \le y} ({f_1}_{(N_1,M_1]} \star {f_2}_{\le \frac{x}{N_1}})(n) \chi(n) \Bigr\vert
\end{align*}
we obtain a bound like
\begin{align}
\nonumber & \ll \bigl(Q + M_1^{\frac{1}{2}}\bigr)\biggl(Q + \frac{x^{\frac{1}{2}}}{N_1^{\frac{1}{2}}}\biggr) M_1^{\frac{1}{2}} \frac{x^{\frac{1}{2}}}{N_1^{\frac{1}{2}}} \log^{\frac{\alpha_{f_1} + \alpha_{f_2}}{2}+1+\epsilon}x \\ \label{BEST} & = \biggl(x^{\frac{1}{2}}Q^{2}\Bigl(\frac{M_1}{N_1}\Bigr)^{\frac{1}{2}}+ x \frac{M_1}{N_1} + x^{\frac{1}{2}}Q\Bigl(\frac{M_1}{N_1}\Bigr)^{\frac{1}{2}}  \Bigl(M_1^{\frac{1}{2}} + \frac{x^{\frac{1}{2}}}{N_1^{\frac{1}{2}}}\biggr) \biggr)\log^{\frac{\alpha_{f_1} + \alpha_{f_2}}{2}+1+\epsilon}x.
\end{align}
Combining a dicotomic method with Theorem \ref{CRIV2} we can find a better bound when $\log M_1 \ll M_1/N_1$. 

\begin{Lemma} \label{CORFOND}
Given $f_1, f_2 \in \D$, $M_1, N_1$ such that $M_1/N_1 \ll x$ and $\epsilon>0$,
\begin{align}
\nonumber & \sum_{q \le Q} \frac{q}{\phi(q)} \sump_{\chi \bmod q} \max_{y \le x} \Bigl\vert \sum_{n \le y} ({f_1}_{(N_1,M_1]} \star f_2)(n) \chi(n) \Bigr\vert \\ 
& \label{CORFONDEQ} \ll 
\biggl(\bigl(x^{\frac{1}{2}}Q^2 + x\bigr)\log M_1 + x^{\frac{1}{2}}Q\Bigl(M_1^{\frac{1}{2}} + \frac{x^{\frac{1}{2}}}{N_1^{\frac{1}{2}}}\Bigr)\biggr)\log^{\frac{\alpha_{f_1} + \alpha_{f_2}}{2}+1+\epsilon}x. 
 \end{align}
\end{Lemma}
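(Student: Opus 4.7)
The plan is to perform a dyadic decomposition of the $f_1$-range $(N_1,M_1]$ and apply Theorem \ref{CRIV2} on each piece, trading the $(M_1/N_1)^{1/2}$ losses of the direct bound \eqref{BEST} for a $\log M_1$ factor. Specifically, I would cover $(N_1,M_1]$ by $J+1 \ll \log M_1$ dyadic intervals $(K_j,2K_j]$ with $K_j = N_1\, 2^j$, $j = 0, \ldots, J = \lceil \log_2(M_1/N_1) \rceil$. For any $n \le x$ contributing to $({f_1}_{(K_j,2K_j]}\star f_2)(n)$ the complementary divisor satisfies $m_2 = n/m_1 \le x/K_j$, so on $[1,x]$ we may replace $f_2$ by ${f_2}_{\le x/K_j}$; moreover the maximum $\max_{y \le x}$ distributes over the dyadic pieces by the triangle inequality, leaving $O(\log M_1)$ instances of Theorem \ref{CRIV2} with parameters $(2K_j, x/K_j)$.

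For each dyadic block, $f_1,f_2 \in \D$ gives $\sum_{n\le 2K_j}|f_1(n)|^2 \ll K_j \log^{\alpha_{f_1}+\epsilon}x$ and $\sum_{n\le x/K_j}|f_2(n)|^2 \ll (x/K_j)\log^{\alpha_{f_2}+\epsilon}x$, so the two square-root factors combine to $x^{1/2}\log^{(\alpha_{f_1}+\alpha_{f_2})/2+\epsilon}x$, while $\log(M_1M_2) = \log(2x)$ contributes another $\log x$. Expanding
$$
(Q^2 + 2K_j)^{1/2}(Q^2 + x/K_j)^{1/2} \ll Q^2 + Qx^{1/2}K_j^{-1/2} + QK_j^{1/2} + x^{1/2}
$$
via $\sqrt{a+b}\sqrt{c+d} \le \sqrt{ac}+\sqrt{ad}+\sqrt{bc}+\sqrt{bd}$, the $j$-th block contributes at most
$$
\bigl(x^{1/2}Q^2 + xQK_j^{-1/2} + x^{1/2}QK_j^{1/2} + x\bigr)\log^{\frac{\alpha_{f_1}+\alpha_{f_2}}{2}+1+\epsilon}x.
$$

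Summing over $j = 0,\ldots,J$, the two $j$-independent terms produce $(x^{1/2}Q^2+x)\log M_1$; the decreasing geometric series $\sum_j K_j^{-1/2}$ is controlled by its first term $N_1^{-1/2}$, giving $xQN_1^{-1/2} = x^{1/2}Q \cdot x^{1/2}/N_1^{1/2}$; and the increasing series $\sum_j K_j^{1/2}$ is controlled by its last term $\asymp M_1^{1/2}$, giving $x^{1/2}QM_1^{1/2}$. Collecting these four contributions yields precisely the right-hand side of \eqref{CORFONDEQ}. The main obstacle is essentially bookkeeping: one must check that the $\log$-losses from $f_i \in \D$ are bounded uniformly in $j$ (they are, since $K_j \le M_1$ and $x/K_j \le x$ both yield $\log$ factors absorbed into $\log^{\cdots}x$), and that the two geometric dyadic sums collapse to their boundary terms without producing an extra $\log$ factor on the $Q$-dependent pieces, so that $\log M_1$ appears only on the two $j$-independent contributions.
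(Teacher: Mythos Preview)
Your proposal is correct and follows essentially the same route as the paper: dyadic decomposition of $(N_1,M_1]$ into $O(\log M_1)$ blocks $(T,2T]$ with $T=N_1 2^k$, application of Theorem~\ref{CRIV2} to each block with the complementary range truncated at $x/T$, and then summation over $k$ where the $j$-independent terms pick up the $\log M_1$ factor while the two geometric series in $T^{\pm 1/2}$ collapse to $M_1^{1/2}$ and $N_1^{-1/2}$ respectively. Your write-up is in fact slightly more explicit than the paper's about why no extra $\log$ appears on the $Q$-dependent pieces.
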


\begin{proof}
The estimate \eqref{BEST} is good when $M_1 \asymp N_1$
The idea is to split the interval $(N_1,M_1]$ in subintervals of the type $[T,2T]$ and then apply Theorem \ref{CRIV2} at each of this subintervals. For $T \le x$
\begin{align}
& \nonumber  \sum_{q \le Q} \frac{q}{\phi(q)} \sump_{\chi \bmod q} \max_{y \le x} \Bigl\vert \sum_{n \le y} ({f_1}_{(T,2T]} \star f_2)(n) \chi(n) \Bigr\vert \nonumber \\ & \nonumber = 
\sum_{q \le Q} \frac{q}{\phi(q)} \sump_{\chi \bmod q} \max_{y \le x} \Bigl\vert \sum_{n \le y} ({f_1}_{(T,2T]} \star {f_2}_{\le \frac{x}{T}})(n) \chi(n) \Bigr\vert
\\ \label{fondfact}
& \ll  
\bigl(Q + T^{\frac{1}{2}}\bigr)\biggl(Q + \frac{x^{\frac{1}{2}}}{T^{\frac{1}{2}}}\biggr) T^{\frac{1}{2}} \frac{x^{\frac{1}{2}}}{T^{\frac{1}{2}}} \log^{\frac{\alpha_{f_1} + \alpha_{f_2}}{2}+1+\epsilon}x \\
\label{proaa} &  =
\biggl(x^{\frac{1}{2}}Q^2 + x + x^{\frac{1}{2}}Q\Bigl(T^{\frac{1}{2}} + \frac{x^{\frac{1}{2}}}{T^{\frac{1}{2}}}\Bigr)\biggr)\log^{\frac{\alpha_{f_1} + \alpha_{f_2}}{2}+1+\epsilon}x. 
\end{align}
We choose $T=N_12^k$ by varying $k \in \mathscr{S} \subset \N$ such that 
\begin{equation*}
(N_1,M_1] \subset \bigcup_{k \in \mathscr{S}} [N_12^k,N_12^{k+1}]
\end{equation*}
and $\abs*{\mathscr{S}}$ is minimum. In general the inclusion will be proper, to avoid problems and to be able to use the triangle inequality we extend to zero $f_1$ in the external points to $ (N_1, M_1] $, i.e. we define $\tilde f_1 = {f_1}_{(N_1,M_1]} $.
Now using the triangle inequality
\begin{align*} 
\Bigl\vert \sum_{n \le y} ({f_1}_{(N_1,M_1]} \star f_2)(n) \chi(n) \Bigr\vert  & = \Bigl\vert \sum_{n \le y} (\tilde f_1 \star f_2)(n) \chi(n) \Bigr\vert 
\\  & \le \sum_{\substack{T = N_12^k \\ k \in \mathscr{S}}} \Bigl\vert \sum_{n \le y} (\tilde {f_1}_{(T,2T]} \star f_2)(n) \chi(n) \Bigr\vert.
\end{align*}
Since $T \in [N_1,2M_1]$, with \eqref{proaa} we can conclude
\begin{align*}
& \sum_{q \le Q} \frac{q}{\phi(q)} \sump_{\chi \bmod q} \max_{y \le x} \Bigl\vert \sum_{n \le y} ({f_1}_{(N_1,M_1]} \star {f_2})(n) \chi(n) \Bigr\vert \\ & \ll 
\biggl(\bigl(x^{\frac{1}{2}}Q^2 + x\bigr)\abs*{\mathscr{S}} + x^{\frac{1}{2}}Q\Bigl(M_1^{\frac{1}{2}} + \frac{x^{\frac{1}{2}}}{N_1^{\frac{1}{2}}}\Bigr)\biggr)\log^{\frac{\alpha_{f_1} + \alpha_{f_2}}{2}+1+\epsilon}x 
\end{align*}
and since $\abs*{\mathscr{S}} \ll \log M_1$, we obtain the claim.
\end{proof}

\begin{oss} \label{OSSFOND}
We remark that the previous Lemma is useful also when we have to estimate
\begin{align*}
& \sum_{q \le Q} \frac{q}{\phi(q)} \sump_{\chi \bmod q} \max_{y \le x} \Bigl\vert \sum_{n \le y} ({f_1}_{>N_1} \star {f_2}_{>N_2})(n) \chi(n) \Bigr\vert,
\end{align*}
indeed we can take $M_1= x/N_2$ and obtain the bound
\begin{align} \label{OSSFONDEQ}
\ll \biggl(\bigl(x^{\frac{1}{2}}Q^2 + x\bigr)\log{x} + xQ\Bigl(\frac{1}{N_1^{\frac{1}{2}}} + \frac{1}{N_2^{\frac{1}{2}}}\Bigr)\biggr)\log^{\frac{\alpha_{f_1} + \alpha_{f_2}}{2}+1+\epsilon}x 
 \end{align}
\end{oss}

\subsection{Weighted Vaughan's identity.}
We want to use a decomposition formula for $\Lambda_{fg}$ using a weight $\eta: \N \to \C$ such that $\eta(b) = 1$ for $b \le V_1$ as A. Sedunova did in \cite{Sedunova20182}.
We know the classic Vaughan's identity
\begin{align*}
\Lambda_{fg} & = {\Lambda_{fg}}_{\le U_1} -  {\Lambda_{fg}}_{\le U_1} \star {\mu_f}_{\le V_1} \star f + {\mu_f}_{\le V_1} \star g + {\Lambda_{fg}}_{>U_1} \star {\mu_f}_{> V_1} \star f \\ & = 
\Lambda_1 + \Lambda_2+ \Lambda_3+ \Lambda_4
\end{align*}
that follows from \eqref{convo1} and \eqref{convo2}, indeed
\begin{align*}
\Lambda_{fg} & =  {\Lambda_{fg}}_{\le U_1} +  \Lambda_{fg}  -  {\Lambda_{fg}}_{\le U_1} =
{\Lambda_{fg}}_{\le U_1} + \mu_f \star g -  {\Lambda_{fg}}_{\le U_1}  \star \mu_f \star f \\ & =
{\Lambda_{fg}}_{\le U_1} + {\mu_f}_{\le V_1} \star g + {\mu_f}_{> V_1} \star g  -  {\Lambda_{fg}}_{\le U_1}  \star {\mu_f}_{\le V_1} \star f - {\Lambda_{fg}}_{\le U_1}  \star {\mu_f}_{> V_1} \star f \\ & = 
{\Lambda_{fg}}_{\le U_1} - {\Lambda_{fg}}_{\le U_1}  \star {\mu_f}_{\le V_1} \star f +  {\mu_f}_{\le V_1} \star g   + {\mu_f}_{> V_1} \star (g - {\Lambda_{fg}}_{\le U_1} \star f)
\end{align*}
and we use that from \eqref{convo1} and \eqref{convo2} follows also \begin{equation} \label{convo3}
g = \Lambda_{fg} \star f.
\end{equation}
We claim that, more in general
\begin{Lemma} \label{weightedvaughan}
 \textit{For every $\eta : \N \to \C$ such that $\eta(b) = 1$ for every $b \le V_1$
\begin{align*}
\Lambda_{fg} & = {\Lambda_{fg}}_{\le U_1} -  {\Lambda_{fg}}_{\le U_1} \star (\mu_f \cdot \eta) \star f + (\mu_f \cdot \eta) \star g + {\Lambda_{fg}}_{>U_1} \star \bigl(\mu_f \cdot (1-\eta)\bigr) \star f \\ & = 
\Lambda_1' + \Lambda_2'+ \Lambda_3'+ \Lambda_4'.
\end{align*}}
\end{Lemma}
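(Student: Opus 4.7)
The plan is to follow verbatim the same algebraic manipulation that produced the classical Vaughan identity in the excerpt just above the Lemma, only replacing the indicator ${\mathds{1}}_{[1,V_1]}$ (which cuts $\mu_f$ into ${\mu_f}_{\le V_1}$ and ${\mu_f}_{>V_1}$) by the weight $\eta$ and its complement $1-\eta$. The identity itself is purely algebraic, so the hypothesis $\eta(b)=1$ for $b\le V_1$ will not actually be used in the proof of the Lemma; it is recorded in the statement only because it is what makes the decomposition useful later when estimating $\Lambda_1'$ (and matches Sedunova's setup).

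Concretely, I would start with the tautology $\Lambda_{fg}={\Lambda_{fg}}_{\le U_1}+(\Lambda_{fg}-{\Lambda_{fg}}_{\le U_1})$ and rewrite the second summand via \eqref{convo1} and \eqref{convo2} as
\begin{equation*}
\Lambda_{fg}-{\Lambda_{fg}}_{\le U_1}=\mu_f\star g-{\Lambda_{fg}}_{\le U_1}\star \mu_f\star f.
\end{equation*}
Then I would split $\mu_f=\mu_f\cdot\eta+\mu_f\cdot(1-\eta)$ pointwise and distribute the convolutions over the two pieces, obtaining
\begin{align*}
\Lambda_{fg} &= {\Lambda_{fg}}_{\le U_1}+(\mu_f\cdot\eta)\star g+(\mu_f\cdot(1-\eta))\star g\\
&\quad-{\Lambda_{fg}}_{\le U_1}\star(\mu_f\cdot\eta)\star f-{\Lambda_{fg}}_{\le U_1}\star(\mu_f\cdot(1-\eta))\star f.
\end{align*}

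Collecting the two terms that carry the factor $\mu_f\cdot(1-\eta)$ and using the convolution identity \eqref{convo3}, namely $g=\Lambda_{fg}\star f$, gives
\begin{equation*}
(\mu_f\cdot(1-\eta))\star\bigl(g-{\Lambda_{fg}}_{\le U_1}\star f\bigr)=(\mu_f\cdot(1-\eta))\star{\Lambda_{fg}}_{>U_1}\star f,
\end{equation*}
which is exactly $\Lambda_4'$. The remaining three terms are literally $\Lambda_1'$, $-\Lambda_2'$ and $\Lambda_3'$, and the Lemma follows.

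I do not expect any genuine obstacle here: the argument is a direct transcription of the classical Vaughan identity in which $\eta$ merely plays the role of a formal partition of unity on $\N$. The only thing to be careful about is that all convolutions are associative and commutative on arithmetic functions, so the reordering in step three is legitimate without needing $\eta$ to have any particular shape; the specific choice $\eta(b)=1$ for $b\le V_1$ becomes relevant only when hypothesis \ref{H4} is invoked to bound $\Lambda_2'$ in the proof of Theorem \ref{mainresult}.
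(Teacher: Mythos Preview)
Your argument is correct and in fact slightly cleaner than the paper's. The paper does not redo the derivation from scratch; instead it starts from the already-established classical Vaughan identity $\Lambda_{fg}=\Lambda_1+\Lambda_2+\Lambda_3+\Lambda_4$ and computes the differences $\Lambda_i'-\Lambda_i$ for $i=2,3,4$, showing that these three remainders sum to zero via \eqref{convo3}. That comparison genuinely uses the hypothesis $\eta(b)=1$ for $b\le V_1$ (it is needed to identify $(\mu_f\cdot\eta)_{\le V_1}={\mu_f}_{\le V_1}$ so that each $\Lambda_i'$ can be written as $\Lambda_i$ plus an explicit extra term), whereas your direct derivation shows that the identity in fact holds for \emph{any} weight $\eta:\N\to\C$, which is a small but genuine sharpening. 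Two minor slips to fix: in your last sentence the remaining three terms are $\Lambda_1',\Lambda_2',\Lambda_3'$ (not ``$-\Lambda_2'$'', since the minus sign is already part of the definition of $\Lambda_2'$ in the statement); and the hypothesis on $\eta$ is used later not for $\Lambda_1'$ but in the treatment of $S_4$ (to obtain $(1-\eta)=(1-\eta)_{>V_1}$) and of $S_2''$ via \ref{H4}.
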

\begin{proof}
We observe, using essentially that $\eta(b) = 1$ for every $b \le V_1$,
\begin{align*}
\Lambda_1' & =\Lambda_1, \\
\Lambda_2' & =\Lambda_2 + {\Lambda_{fg}}_{\le U_1} \star (\mu_f \cdot \eta)_{>V_1} \star f, \\
\Lambda_3' & =\Lambda_3 - (\mu_f \cdot \eta)_{>V_1} \star g, \\
\Lambda_4' & =\Lambda_4 + {\Lambda_{fg}}_{>U_1} \star (\mu_f \cdot \eta)_{> V_1} \star f.
\end{align*}
It remains to show that the sum of the three remainders is equal to zero, but this is true since, from \eqref{convo3}
\begin{align*}
& {\Lambda_{fg}}_{\le U_1} \star (\mu_f \cdot \eta)_{>V_1} \star f  - (\mu_f \cdot \eta)_{>V_1} \star g+ {\Lambda_{fg}}_{>U_1} \star (\mu_f \cdot \eta)_{> V_1} \star f \\
& = (\mu_f \cdot \eta)_{>V_1} \star ( {\Lambda_{fg}}_{\le U_1} \star f - g +  {\Lambda_{fg}}_{>U_1} \star f) = 0. \qedhere
\end{align*} 
\end{proof}

\section{Main proof}
In the proof we denote with $\epsilon>0$ any small positive constant that rises from the definitions of $\alpha_D$ and $\beta_D(k)$ as infima; at the end we will still indicate with $\epsilon$ the maximum of the constant previously considered.
First we show, as R. C. Vaughan did in \cite{Vaughan1980}, that we can treat larger $Q$ more easily than smaller $Q$.
\subsection{The case $\mathbf{Q^2>x}$.}
We only use the modified multiplicative large sieve (Theorem \ref{CRIV2}) with $M_1=1, \, f_1(1)=1, \,  M_2=[x], \, f_2(n)=\Lambda_{fg}(n)$. We obtain
\begin{align*} \sum_{q \le Q} \frac{q}{\phi(q)} \sump_{\chi \bmod q} \max_{y \le x} \Bigl\vert \sum_{n \le y} \Lambda_{fg}(n) \chi(n) \Bigr \vert \ll (x^{\frac{1}{2}}Q+Q^2)\Bigl( \sum_{n \le x} \abs*{\Lambda_{fg}(n)}^2\Bigr)^{\frac{1}{2}} \log{x}.
\end{align*}
Using \ref{H2} and the definition of $\D$
\begin{align*}
\sum_{q \le Q} \frac{q}{\phi(q)} \sump_{\chi \bmod q} \max_{y \le x} \Bigl\vert \sum_{n \le y} \Lambda_{fg}(n) \chi(n) \Bigr \vert & \ll (xQ+x^{\frac{1}{2}}Q^2) \log^{\frac{\alpha_{\Lambda_{fg}}}{2}+1+\epsilon}x
\\ & \ll x^{\frac{1}{2}}Q^2 \log^{\frac{\alpha_{\Lambda_{fg}}}{2}+1+\epsilon}x
\end{align*}
since $Q^2>x$.

From now on we can assume $Q^2 \le x$.
We set four parameters \\ $U_0=U_0(x,Q)\le U_1=U_1(x,Q)$, $V_1=V_1(x,Q) < V_2=V_2(x,Q)$. 
Recalling Lemma \ref{weightedvaughan}, for any Dirichlet character $\chi \bmod q$ we can write
\begin{equation*}
\sum_{n \le y} \Lambda_{fg}(n) \chi(n) = \sum_{i=1}^4 \sum_{n \le y} \Lambda'_i(n) \chi(n) = \sum_{i=1}^4 S_i(y,\chi).
\end{equation*}
We prove the Theorem \ref{mainresult} by estimating each of the sums
\begin{equation*}
S_i(x,Q) = \sum_{q \le Q} \frac{q}{\phi(q)} \sump_{\chi \bmod q} \max_{y \le x} \abs*{S_i(y,\chi)}, \quad 1 \le i \le 4.
\end{equation*}
\subsection{The estimate for $\mathbf{S_1(x,Q)}$.}
Using hypothesis \ref{H2} and definition \eqref{beta} we obtain
\begin{equation*}
\abs*{S_1(y,\chi)} = \Bigl\vert \sum_{n \le \min \{U_1,y\}} \Lambda_{fg}(n) \chi(n) \Bigr\vert \le \sum_{n \le U_1} \abs*{\Lambda_{fg}(n)} \ll U_1 \log^{\beta_{\Lambda_{fg}}(0)+\epsilon}U_1,
\end{equation*} 
and so 
\begin{equation*}
S_1(x,Q) = \sum_{q \le Q} \frac{q}{\phi(q)} \sump_{\chi \bmod q} \max_{y \le x} \abs*{S_1(y,\chi)} \ll  U_1 Q^2 \log^{\beta_{\Lambda_{fg}}(0)+\epsilon}U_1.
\end{equation*}
\subsection{The estimate for $\mathbf{S_2(x,Q)}$.}
We recall the definition 
\begin{equation*}
S_2(y,\chi) = - \sum_{n \le y} \bigl( {\Lambda_{fg}}_{\le U_1} \star (\mu_f \cdot \eta) \star f\bigr)(n) \chi(n),
\end{equation*}
we split this sum into two parts
\begin{equation*}
S_2(y,\chi) = S_2'(y,\chi) + S_2''(y,\chi)
\end{equation*}
where
\begin{equation*}
S_2'(y,\chi) = - \sum_{n \le y} \bigl( {\Lambda_{fg}}_{\le U_0} \star (\mu_f \cdot \eta) \star f\bigr)(n) \chi(n),
\end{equation*}
and
\begin{equation*}
S_2''(y,\chi) = - \sum_{n \le y} \bigl( {\Lambda_{fg}}_{(U_0,U_1]} \star (\mu_f \cdot \eta) \star f\bigr)(n) \chi(n).
\end{equation*}
For $S_2'(y,\chi)$, using \ref{H4} and writing $n=abc$
\begin{align*}
\abs*{ S_2'(y,\chi)} & = \Bigl \vert \sum_{a \le U_0} \Lambda_{fg}(a) \chi(a) \sum_{b} \mu_f(b) \eta(b) \chi(b) \sum_{c \le \frac{y}{ab}} f(c) \chi(c) \Bigr\vert \\ & \ll 
 \sum_{a \le U_0} \abs*{\Lambda_{fg}(a)} \sum_{b \le V_2}  \abs*{\mu_f(b)} \Bigl \vert \sum_{c \le \frac{y}{ab}} f(c) \chi(c) \Bigr\vert,
\end{align*}
so that we can use hypothesis \ref{H3} to estimate the innermost sum for non-principal primitive characters $\chi \bmod q$. We get
\begin{align*}
\abs*{S_2'(y,\chi)} & \ll y^{\theta_f} q^{\frac{1}{2}} \log q \sum_{a \le U_0} \frac{\abs*{\Lambda_{fg}(a)}}{a^{\theta_f}} \sum_{b \le V_2} \frac{\abs*{\mu_f(b)}}{b^{\theta_f}} \\ & + 
y^{\gamma_f} \sum_{a \le U_0} \frac{\abs*{\Lambda_{fg}(a)}}{a^{\gamma_f}} \sum_{b \le V_2} \frac{\abs*{\mu_f(b)}}{b^{\gamma_f}}.
\end{align*}
Then, by using hypothesis \ref{H2} and using four times definition \eqref{beta}, we obtain
\begin{align*}
\abs*{S_2'(y,\chi)} & \ll y^{\theta_f} V_2^{1-\theta_f}q^{\frac{1}{2}} \log^{\beta_{\mu_f}(\theta_f)+1+\epsilon}(V_2q) \sum_{a \le U_0} \frac{\abs*{\Lambda_{fg}(a)}}{a^{\theta_f}} \\ & +  y^{\gamma_f}  V_2^{1-\gamma_f} \log^{\beta_{\mu_f}(\gamma_f)+\epsilon}V_2 \sum_{a \le U_0} \frac{\abs*{\Lambda_{fg}(a)}}{a^{\gamma_f}} \\ & \ll
y^{\theta_f}(U_0V_2)^{1-\theta_f} q^{\frac{1}{2}}  \log^{\beta_{\mu_f}(\theta_f)+\beta_{\Lambda_{fg}}(\theta_f)+1+\epsilon}(U_0 V_2q) \\ & + y^{\gamma_f} (U_0V_2)^{1-\gamma_f} \log^{\beta_{\mu_f}(\gamma_f)+\beta_{\Lambda_{fg}}(\gamma_f)+\epsilon}( U_0V_2).
\end{align*} Instead, for $\chi = \chi_0$ we have, using two times definition \ref{beta},
\begin{align*}
 \abs*{S_2'(y,\chi_0)} & \le \sum_{a \le U_0} \abs*{\Lambda_{fg}(a)} \sum_{b \le V_2}  \abs*{\mu_f(b)} \sum_{c \le \frac{y}{ab}} \abs*{f(c)} \\ &
\ll y \log^{\beta_f(0)+\epsilon}y \sum_{a \le U_0} \frac{\abs*{\Lambda_{fg}(a)}}{a} \sum_{b \le V_2}  \frac{\abs*{\mu_f(b)}}{b}
\\ & \ll y \log^{\beta_f(0) + \beta_{\mu_f}(1)+\epsilon}(y V_2) \sum_{a \le U_0} \frac{\abs*{\Lambda_{fg}(a)}}{a} 
\\ & \ll y \log^{\beta_f(0) + \beta_{\mu_f}(1) + \beta_{\Lambda_{fg}}(1)+\epsilon}(y \,  U_0 V_2).
\end{align*}
This implies that
\begin{align*}
S_2'(x,Q) & = \sum_{q \le Q} \frac{q}{\phi(q)} \sump_{\chi \bmod q} \max_{y \le x} \abs*{S_2'(y,\chi)} \\ & \ll x^{\theta_f} (U_0V_2)^{1-\theta_f}Q^{\frac{5}{2}} \log^{\beta_{\mu_f}(\theta_f) + \beta_{\Lambda_{fg}}(\theta_f) + 1+\epsilon}(U_0V_2Q) \\ & + x^{\gamma_f} (U_0V_2)^{1-\gamma_f}  Q^2 \log ^{\beta_{\mu_f}(\gamma_f) + \beta_{\Lambda_{fg}}(\gamma_f)+\epsilon}(U_0V_2) \\ & + x  \log^{\beta_f(0) + \beta_{\mu_f}(1) + \beta_{\Lambda_{fg}}(1)+\epsilon}(xU_0V_2).
\end{align*}
For $S_2''(y,\chi)$ we recall the definition
\begin{align*}
 S_2''(x,Q) = \sum_{q \le Q} \frac{q}{\phi(q)} \sump_{\chi \bmod q} \max_{y \le x} \Bigl\vert \sum_{n \le y} \bigl( {\Lambda_{fg}}_{(U_0,U_1]} \star (\mu_f \cdot \eta) \star f\bigr)(n) \chi(n) \Bigr\vert.
\end{align*} 
We want to use Lemma \ref{CORFOND}. We choose $f_1 = \Lambda_{fg}$, $f_2 = (\mu_f \cdot \eta) \star f$, $N_1 = U_0$ e $M_1 = U_1$. From hypothesis \ref{H4} we have $(\mu_f \cdot \eta) \star f \in \mathscr{D}$ with $\alpha_{(\mu_f \cdot \eta) \star f}=0$. Moreover we have stronger bounds than for other functions in $\mathscr{D}$, indeed we can include the denominator $1/\log(V_2/V_1)$ in \eqref{CORFONDEQ} since this does not depend on the upper limit of each partial sums. Finally we obtain
\begin{equation*}
S_2''(x,Q) \ll \biggl(\bigl(x^{\frac{1}{2}}Q^2 + x\bigr)\log U_1 + x^{\frac{1}{2}}Q\Bigl(U_1^{\frac{1}{2}} + \frac{x^{\frac{1}{2}}}{U_0^{\frac{1}{2}}}\Bigr)\biggr)\frac{\log^{\frac{\alpha_{\Lambda_{fg}}}{2}+1+\epsilon}x}{\log^{\frac{1}{2}}(\frac{V_2}{V_1})}
\end{equation*}

\subsection{The estimate for $\mathbf{S_3(x,Q)}$.}
We recall the definition
\begin{equation*}
S_3(x,Q) = \sum_{q \le Q} \frac{q}{\phi(q)} \sump_{\chi \bmod q} \max_{y \le x} \Bigl\vert 
\sum_{n \le y} \bigl( (\mu_f \cdot \eta) \star g\bigr)(n) \chi(n) \Bigr\vert.
\end{equation*}
We define a step function $\G : \R \to \R$ by $\G(t)=g(1)$ if $t \le 1$ and $\G(t) = g(n) - g(n-1)$ if $n-1 < t \le n$ for $n \ge 2$. Then we observe that $g(n) = \int_0^n \G(t) dt$ and that $\G$ is positive, since the function $g$ is positive and increasing from \ref{H1}. 
We write, by partial summation,
\begin{align*}
\abs*{S_3(y,\chi)} & = \Bigl \vert \mathop{\sum\sum}_{ab \le y} \mu_f(a) \eta(a) g(b)  \chi(ab) \Bigr\vert \\
& = \Bigl \vert \sum_{a \le V_2} \mu_f(a) \eta(a) \chi(a) \sum_{b \le \frac{y}{a}} \chi(b) \int_0^b \G(t) dt \Bigr \vert \\
& =  \Bigl \vert \sum_{a \le V_2} \mu_f(a) \eta(a) \chi(a) \int_0^{\frac{y}{b}} \sum_{t < b \le \frac{y}{a}} \chi(b) \, \G(t)  \, dt \Bigr \vert \\
& \le \int_0^y \G(t) \sum_{a \le V_2} \abs*{ \mu_f(a) \eta(a)} \Bigl\vert \sum_{t < b \le \frac{y}{a}} \chi(b) \Bigr \vert \, dt .
\end{align*} 
We can use the Pólya-Vinogradov inequality to estimate the inner sum for non-principal characters 
\begin{equation*}
\abs*{S_3(y,\chi)} \ll g(y) q^{\frac{1}{2}} \log q \sum_{a \le V_2} \abs*{\mu_f(a)\eta(a)}.
\end{equation*}
Moreover, using hypotheses \ref{H2} and \ref{H4} according with definition \eqref{beta}, we can write
\begin{equation*}
\abs*{S_3(y,\chi)} \ll g(y) V_2 q^{\frac{1}{2}} \log^{\beta_{\mu_f}(0)+1+\epsilon}(V_2q).
\end{equation*} 
For $\chi=\chi_0$, again using hypotheses \ref{H2} and \ref{H4} according with definition \eqref{beta} we can write
\begin{equation*}
\abs*{S_3(y,\chi_0)} \ll g(y) y \sum_{a \le V_2} \frac{\abs{\mu_f(a)}}{a} \ll g(y) y \log^{\beta_{\mu_f}(1)+\epsilon}V_2.
\end{equation*} We further obtain 
\begin{align*}
S_3(x,Q) = \sum_{q \le Q} \frac{q}{\phi(q)} \sump_{\chi \bmod q} \max_{y \le x} \abs*{S_3(y,\chi)} & \ll g(x) V_2 Q^{\frac{5}{2}} \log^{\beta_{\mu_f}(0)+1+\epsilon}(V_2Q) \\ & + 
g(x) x \log^{\beta_{\mu_f}(1)+\epsilon}V_2.
\end{align*}

\subsection{The estimate for $\mathbf{S_4(x,Q)}$.} 
We recall the definition
\begin{equation*}
S_4(x,Q)
= \sum_{q \le Q} \frac{q}{\phi(q)} \sump_{\chi \bmod q} \max_{y \le x} \Bigl\vert \sum_{n \le y} \bigl( {\Lambda_{fg}}_{>U_1} \star ( \mu_f \cdot (1-\eta)) \star f \bigr)(n) \chi(n) \Bigr\vert
\end{equation*}
We notice that $(1-\eta) = (1-\eta)_{>V_1}$ from \eqref{H4} and clearly
\begin{equation*}
{\Lambda_{fg}}_{>U_1} \star \bigl(( \mu_f \cdot (1-\eta))_{>V_1}  \star f \bigr)= {\Lambda_{fg}}_{>U_1} \star \bigl(( \mu_f \cdot (1-\eta)) \star f \bigr)_{>V_1}.
\end{equation*}
Moreover from \eqref{convo1} we have that
\begin{equation*}
\bigl(( \mu_f \cdot (1-\eta)) \star f \bigr)_{>V_1} = \bigl(\mathds{1} - (\mu_f \cdot \eta) \star f\bigr)_{>V_1} = - \bigl((\mu_f \cdot \eta) \star f\bigr)_{>V_1}.
\end{equation*}
So we now can use Remark \ref{OSSFOND} with $f_1=\Lambda_{fg}, f_2= -(\mu_f \cdot \eta) \star f, N_1 = U_1, N_2 = V_1$. In a similar way as we did for $S_2''(x,Q)$, we obtain
\begin{equation*}
S_4(x,Q) \ll \biggl(\bigl(x^{\frac{1}{2}}Q^2 + x\bigr)\log{x} + xQ\Bigl(\frac{1}{V_1^{\frac{1}{2}}} + \frac{1}{U_1^{\frac{1}{2}}}\Bigr)\biggr)\frac{\log^{\frac{\alpha_{\Lambda_{fg}}}{2}+1+\epsilon}x}{\log^{\frac{1}{2}}(\frac{V_2}{V_1})}.
\end{equation*}

\subsection{Completion of the proof.}
Putting these estimates together it holds that
\begin{align*}
S_1(x,Q) & \ll  U_1Q^2 \log^{\beta_{\Lambda_{fg}}(0)+\epsilon}U_1, \\
S_2'(x,Q) & \ll x^{\theta_f} (U_0V_2)^{1-\theta_f}Q^{\frac{5}{2}} \log^{\beta_{\mu_f}(\theta_f) + \beta_{\Lambda_{fg}}(\theta_f) + 1+\epsilon}(U_0V_2Q) \\ & + x^{\gamma_f} (U_0V_2)^{1-\gamma_f}  Q^2 \log ^{\beta_{\mu_f}(\gamma_f) + \beta_{\Lambda_{fg}}(\gamma_f)+\epsilon}(U_0V_2) \\ & + x  \log^{\beta_f(0) + \beta_{\mu_f}(1) + \beta_{\Lambda_{fg}}(1)+\epsilon}(xU_0V_2), \\
S_2''(x,Q) & \ll \biggl(\bigl(x^{\frac{1}{2}}Q^2 + x\bigr)\log U_1 + x^{\frac{1}{2}}Q\Bigl(U_1^{\frac{1}{2}} + \frac{x^{\frac{1}{2}}}{U_0^{\frac{1}{2}}}\Bigr)\biggr)\frac{\log^{\frac{\alpha_{\Lambda_{fg}}}{2}+1+\epsilon}x}{\log^{\frac{1}{2}}(\frac{V_2}{V_1})}, \\
S_3(x,Q) & \ll g(x) V_2 Q^{\frac{5}{2}} \log^{\beta_{\mu_f}(0)+1+\epsilon}(V_2Q) +  g(x) x \log^{\beta_{\mu_f}(1)+\epsilon}V_2,
 \\
S_4(x,Q) & \ll \biggl(\bigl(x^{\frac{1}{2}}Q^2 + x\bigr)\log{x} + xQ\Bigl(\frac{1}{V_1^{\frac{1}{2}}} + \frac{1}{U_1^{\frac{1}{2}}}\Bigr)\biggr)\frac{\log^{\frac{\alpha_{\Lambda_{fg}}}{2}+1+\epsilon}x}{\log^{\frac{1}{2}}(\frac{V_2}{V_1})}.
\end{align*}
This gives the claim. 
We must be careful with $g(x)$: in Corollary \ref{mainresultcor} we have chosen to incorporate it in $L$ since in the benchmark case we have $g(x) = \log x$ but in general we have to know its growth and understand if it is better to integrate it in $L$ or in $M$.

\section{The choice of $U_0, \, U_1, \, V_1, \, V_2$}
Since we have the trivial bounds \eqref{triv1} and \eqref{triv2} we would like to find four parameters such that $M=o(xQ^2)$. We also note that there is symmetry in $M$ with $U_1$ and $V_1$, so we can always assume $U_1=V_1$ and so the scale is $U_0 \le U_1=V_1 < V_2$. 
Assuming that we can choose $U_0 \le U_1=V_1 < V_2 \ll x$, with $V_2/U_1 \gg x^{c}$ for some $c>0$ then $L \ll \log^{l+\epsilon}x$, where
\begin{align*}
l = \max \Bigl\{ & \beta_{\Lambda_{fg}}(0), \, \beta_{\mu_f}(\theta_f) + \beta_{\Lambda_{fg}}(\theta_f) + 1, \,  \beta_{\mu_f}(\gamma_f) + \beta_{\Lambda_{fg}}(\gamma_f), \\ & \, \beta_f(0) + \beta_{\mu_f}(1)+\beta_{\Lambda_{fg}}(1), \, \frac{\alpha_{\Lambda_{fg}}+3}{2}, \, \beta_{\mu_f}(0)+1, \, \beta_{\mu_f}(1) \Bigr\}.
\end{align*}
In view of Lemma \ref{prop12} we have the rough bound for $l$
\begin{align*}
l & \le \max \Bigl\{ \frac{\alpha_{\Lambda_{fg}}}{2}, \, \frac{\alpha_{\mu_f}+\alpha_{\Lambda_{fg}}}{2} + 2 \cdot \mathds{1}(\theta_f)+ 1, \, \frac{\alpha_{\mu_f}+\alpha_{\Lambda_{fg}}}{2} + 2 \cdot \mathds{1}(\gamma_f), \\ & \hspace{1.4cm} \frac{\alpha_{f}+ \alpha_{\mu_f}+\alpha_{\Lambda_{fg}}}{2} +  2, \,  \frac{\alpha_{\Lambda_{fg}}+3}{2} \, , \frac{\alpha_{\mu_f}}{2} + 1
\Bigr\} \\ & \le \max \Bigl\{ \frac{\alpha_f + \alpha_{\mu_f} + \alpha_{\Lambda_{fg}}}{2}+2, \,  \frac{\alpha_{\mu_f}+\alpha_{\Lambda_{fg}}}{2} + 2 \cdot \mathds{1}(\theta_f)+ 1 \Bigr\}.\end{align*}
\section{The classic case}
In R. C. Vaughan's basic mean value Theorem we treat
 \begin{equation*}
f=1, \quad g=\log,\quad \mu_f=\mu, \quad \Lambda_{fg}=\Lambda .\end{equation*} We have
\begin{align*}
& \sum_{n \le x} 1 = x + O(1), \hspace{2cm} \sum_{n \le x} \bigl\vert \Lambda(n)\bigr\vert^2 = x \log x + O(x), \\
& \sum_{n \le x} \Lambda(n) = x + O\biggl(\frac{x}{\log x}\biggr), \hspace{0.55cm} \sum_{n \le x} \frac{1}{n} = \log x + O(1), \\
& \sum_{n \le x} \frac{\Lambda(n)}{n} = \log x + O(1).
\end{align*}
In our notation we obtain
\begin{align*}
&\beta_{\mu}(0) = \beta_{1}(0) = 0 , \quad & \alpha_{\Lambda} = 1, \\
&\beta_{\Lambda}(0)  = 0, \quad & \beta_{\mu}(1) = 1, \\
&\beta_{\Lambda}(1) = 1;
\end{align*} 
all these values clearly are minima.
From Pólya-Vinogradov inequality we have 
\begin{equation*}
\theta_{1} = \gamma_{1} = 0.
\end{equation*}
To satisfy \ref{H4} we recall an estimate due to M. B. Barban and P. P. Vehov \cite{BarbanVehov1968} related to Selberg's sieve (see S. Graham for a stronger result \cite{Graham1978}).
For each $1 \le V_1 < V_2$ it holds
\begin{equation} \label{barb}
\sum_{n=1}^V \Bigl\vert \bigl(\mu \cdot \eta) \star 1\bigr)(n) \Bigr\vert^2 \ll \frac{V} {\log (\frac{V_2}{V_1})},
\end{equation} 
where
\begin{equation} \label{eta}
\eta(b) = \begin{cases}
1 & b \le V_1, \\
\frac{ \log{(\frac{V_2}{b})}}{\log{(\frac{V_2}{V_1})}} & V_1 < b \le V_2, \\
0 & b > V_2.
\end{cases}
\end{equation} 
As a Corollary of Theorem \ref{mainresult} we have the main result of \cite{Sedunova20182}.
\begin{cor} \label{sedunova} \textit{ For each $U_0 = U_0(x,Q) \le U_1 = U_1(x,Q)$, $V_1 = V_1(x,Q) < V_2 =  V_2(x,Q)$ it holds
\begin{equation*}
\sum_{q \le Q} \frac{q}{\phi(q)} \sump_{\chi \bmod q} \max_{y \le x} \Bigl\vert \sum_{n \le y} \Lambda(n) \chi(n) \Bigr\vert \ll M L,
\end{equation*}
where $M$ and $L$ are
\begin{align*}
 M = \max  \biggl\{ & U_1Q^2, \, (U_0V_2)Q^{\frac{5}{2}}, \, x, \, x^{\frac{1}{2}}Q^2, \, \frac{xQ}{U_0^{\frac{1}{2}}}, \,x^{\frac{1}{2}}U_1^{\frac{1}{2}}Q, \, \frac{xQ}{V_1^{\frac{1}{2}}}
 \biggr\}
\end{align*}
and
\begin{align*}
L = \max \biggl\{ &
 \log (U_0V_2Q), \, 
  \log^2(xU_0V_2),  \frac{\log^{\frac{5}{2}}(xU_1)}{\log^{\frac{1}{2}}(\frac{V_2}{V_1})}, 
 \,\log^2(xV_2Q), \, \frac{\log^{\frac{5}{2}}x}{\log^{\frac{1}{2}}(\frac{V_2}{V_1})}  
    \biggr\}.
\end{align*}}
\end{cor}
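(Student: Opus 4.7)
The plan is to deduce Corollary \ref{sedunova} as a direct specialization of Corollary \ref{mainresultcor} to the classical setting $f=1$, $g=\log$, $\mu_f=\mu$, $\Lambda_{fg}=\Lambda$, so the task reduces to verifying the four hypotheses \ref{H1}--\ref{H4} in this case and bookkeeping the numerical values of the $\alpha$'s and $\beta$'s.

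First, I would verify the hypotheses. Hypothesis \ref{H1} is immediate since $g=\log$ is increasing on $\N$. For \ref{H2} one uses the elementary bounds recalled in the section (namely $\sum_{n\le x}1 = x+O(1)$, $\sum_{n\le x}|\mu(n)|^2\le x$, and $\sum_{n\le x}\Lambda(n)^2 = x\log x + O(x)$) to conclude $1,\mu,\Lambda\in\D$ with $\alpha_1=\alpha_\mu=0$ and $\alpha_\Lambda=1$. Hypothesis \ref{H3} for $f=1$ is exactly the P\'olya--Vinogradov inequality, which gives $\sum_{n\le x}\chi(n)\ll q^{1/2}\log q$ for non-principal primitive $\chi\bmod q$, so one may take $\theta_1=\gamma_1=0$. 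Finally \ref{H4} is precisely the Barban--Vehov estimate \eqref{barb} together with the choice \eqref{eta} of $\eta$.

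Second, I would read off the relevant $\beta$-values from the explicit asymptotics listed in the section: $\beta_1(0)=\beta_\mu(0)=\beta_\Lambda(0)=0$, $\beta_\mu(1)=\beta_\Lambda(1)=1$, all of which are attained as minima (so the $\epsilon$ in Theorem \ref{mainresult} may be omitted at these positions). Plugging $\theta_f=\gamma_f=0$ into the expression for $M$ in Corollary \ref{mainresultcor} collapses the two terms $x^{\theta_f}(U_0V_2)^{1-\theta_f}Q^{5/2}$ and $x^{\gamma_f}(U_0V_2)^{1-\gamma_f}Q^2$ to $(U_0V_2)Q^{5/2}$ and $(U_0V_2)Q^2$, and the latter is dominated by the former; similarly $V_2Q^{5/2}$ is absorbed into $(U_0V_2)Q^{5/2}$ since $U_0\ge 1$. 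This leaves the seven terms listed in the statement of $M$.

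Third, I would do the same substitution in $L$. The exponent $\beta_{\Lambda_{fg}}(0)=0$ makes the first term trivial; the exponents $\beta_\mu(\theta_f)+\beta_\Lambda(\theta_f)+1 = 1$ and $\beta_\mu(\gamma_f)+\beta_\Lambda(\gamma_f)=0$ reduce the next two to $\log(U_0V_2Q)$ and a constant; $\beta_1(0)+\beta_\mu(1)+\beta_\Lambda(1)=2$ gives $\log^2(xU_0V_2)$; $\alpha_\Lambda/2+2 = 5/2$ yields the two $\log^{5/2}/\log^{1/2}(V_2/V_1)$ terms. The factor $g(x)=\log x$ in front of the two remaining logarithmic terms is absorbed by merging it into the logarithm of the product of the arguments, producing $\log^2(xV_2Q)$ (and a smaller $\log^2(xV_2)$ which is absorbed). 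Taking the maximum yields exactly the $L$ of the corollary.

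There is no real obstacle here beyond careful accounting; the only external input is the Barban--Vehov inequality \eqref{barb}, which the paper quotes. Everything else is substitution into the already proved Corollary \ref{mainresultcor} and pruning of dominated terms.
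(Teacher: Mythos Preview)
Your proposal is correct and follows exactly the paper's approach: the corollary is obtained by specializing Corollary~\ref{mainresultcor} to $f=1$, $g=\log$, $\mu_f=\mu$, $\Lambda_{fg}=\Lambda$, verifying \ref{H1}--\ref{H4} via the elementary estimates, P\'olya--Vinogradov, and the Barban--Vehov bound \eqref{barb}, then substituting the values $\theta_1=\gamma_1=0$, $\alpha_\Lambda=1$, $\beta_\mu(0)=\beta_1(0)=\beta_\Lambda(0)=0$, $\beta_\mu(1)=\beta_\Lambda(1)=1$ and discarding dominated terms. The paper does precisely this bookkeeping in the section on the classic case and presents the corollary without a separate proof.
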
 
With this result A. Sedunova, in \cite{Sedunova20182}, obtained 
\begin{Teorema} \label{BMVTsed} \textit{
For each $\epsilon>0$,
\begin{equation*}
\sum_{q \le Q} \frac{q}{\phi(q)} \sump_{\chi \bmod q} \max_{y \le x} \Bigl\vert \sum_{n \le y} \Lambda(n) \chi(n) \Bigr\vert \ll \bigl(x + x^{\frac{13}{14}+\epsilon}Q + x^{\frac{1}{2}}Q^2 \bigr) \log^2x.
\end{equation*}}
\end{Teorema}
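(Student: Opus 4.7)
The plan is to apply Corollary \ref{sedunova} with parameters depending on $Q$, and verify that the resulting bound $ML$ fits inside $(x + x^{13/14+\epsilon}Q + x^{1/2}Q^2)\log^2 x$. The case $Q^2 > x$ was already handled at the start of Section 4 and produces $x^{1/2}Q^2\log^{3/2+\epsilon}x$, so I may assume $Q^2 \le x$ throughout. Following the discussion in Section 5 I would take $U_0 = U_1 = V_1 = y$ for a single parameter $y=y(x,Q)$ and set $V_2 = y\cdot x^{\epsilon}$, so that $\log(V_2/V_1) = \epsilon \log x$; this makes the critical factor $\log^{5/2}x/\log^{1/2}(V_2/V_1)$ in $L$ become $\ll_{\epsilon} \log^{2}x$, and a quick inspection shows that all remaining entries of $L$ are $\ll \log^{2}x$ as soon as $y, V_2, Q \le x^{O(1)}$. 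Hence $L \ll \log^{2}x$ regardless of the specific choice of $y$.

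To control $M$ I would let $y$ depend on the size of $Q$, setting $y = x^{1/7}$ when $Q \le x^{3/7}$ and $y = x/Q^2$ when $x^{3/7} < Q \le x^{1/2}$; the two definitions agree at the junction $Q = x^{3/7}$. Substituting these into the seven entries of $M$ gives: $U_1 Q^{2} = yQ^{2}\le x$ in both regimes; the pair $xQ/U_0^{1/2}$ and $xQ/V_1^{1/2}$ collapses to exactly $x^{13/14}Q$ in the first regime and to exactly $x^{1/2}Q^{2}$ in the second; and $x^{1/2}U_1^{1/2}Q$ is $\ll \max(x^{13/14}Q, x)$ throughout. The only delicate entry is $(U_0V_2)Q^{5/2} = y^{2}x^{\epsilon}Q^{5/2}$, which equals $x^{2/7+\epsilon}Q^{5/2}$ in the first regime and $x^{2+\epsilon}/Q^{3/2}$ in the second; in both cases it is $\le x^{13/14+\epsilon}Q$, exactly because the bounds $Q^{3/2}\le x^{9/14}$ and $Q^{5/2}\ge x^{15/14}$ are respectively forced by the two regime conditions. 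Therefore $M \ll x + x^{13/14+\epsilon}Q + x^{1/2}Q^{2}$ uniformly in $Q \le x^{1/2}$, and multiplying by $L$ yields the claim.

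The main obstacle is the simultaneous control of all seven entries of $M$ across the full range of $Q$. The term $(U_0V_2)Q^{5/2}$ is the one that forces both the exponent $13/14$ and the break-point $Q = x^{3/7}$: it is the only entry that does not individually sit inside a single summand of $x + x^{13/14+\epsilon}Q + x^{1/2}Q^{2}$, and the split into two regimes above is dictated precisely by balancing its two natural majorants $x^{13/14+\epsilon}Q$ and $x^{1/2}Q^{2}$ against each other.
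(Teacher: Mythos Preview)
Your argument is correct and follows the same strategy as the paper: apply Corollary~\ref{sedunova}, split into the ranges $Q\le x^{3/7}$ and $x^{3/7}<Q\le x^{1/2}$, and choose the parameters so that $L\ll_{\epsilon}\log^{2}x$ while every entry of $M$ is absorbed into $x+x^{13/14+\epsilon}Q+x^{1/2}Q^{2}$. Your specific choices differ slightly from the paper's---you take $U_0=U_1=V_1=y$ with $y=x/Q^{2}$ in the large-$Q$ range, whereas the paper uses $U_1=V_1=x^{4/7}Q^{-1}$ with $U_0$ smaller by a factor $x^{-\epsilon}$---but both balances work and the verification is the same.
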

This follows taking for $Q \in [x^{3/7+\epsilon}, x^{1/2}]$
\begin{equation*}
U_0 = x^{\frac{4}{7}-\epsilon}Q^{-1}, \, \quad U_1 = V_1 = x^{\frac{4}{7}}Q^{-1}, \quad V_2 = x^{\frac{4}{7}+\frac{5\epsilon}{2}}Q^{-1};
\end{equation*}
while for $Q \in [1,x^{3/7+\epsilon}]$
\begin{equation*}
U_0 = x^{\frac{1}{7}-\epsilon}, \quad U_1 = V_1 = x^{\frac{1}{7}}, \quad V_2 = x^{\frac{1}{7}+\frac{\epsilon}{2}}.
\end{equation*}
We remark that the exponent $13/14$ is optimal here, i.e. searching for the minimal $A>0$ such that for each $\epsilon>0$ it holds
\begin{equation*}
\sum_{q \le Q} \frac{q}{\phi(q)} \sump_{\chi \bmod q} \max_{y \le x} \Bigl\vert \sum_{n \le y} \Lambda(n) \chi(n) \Bigr\vert \ll \bigl(x + x^{A+\epsilon}Q + x^{\frac{1}{2}}Q^2 \bigr) \log^2x
\end{equation*}
then one can show that only using Corollary \ref{sedunova} it cannot be taken $A < 13/14$.

\section{Application to the generalized von Mangoldt function}
The generalized von Mangoldt function is defined as
\begin{equation*}
\Lambda_k = \mu \star \log^k \, 
\end{equation*}
 for $k \in \N$. One can show the recursive relation
 \begin{equation*}
 \Lambda_{k+1} = \Lambda_k \cdot \log + \Lambda \star \Lambda_k
 \end{equation*}
 and so, in particular, $\Lambda_k(n) \ge 0$.
In \cite{Levinson1965} it is shown that 
\begin{equation} \label{levin}
\sum_{n \le x} \Lambda_k(n) \sim k x \log^{k-1}x.
\end{equation}
From the Möbius inversion formula it holds
 \begin{equation*}
 \log^k = \Lambda_k \star 1
 \end{equation*}
 and so $\Lambda_k(n) \le (\log n)^k$.
We can easily derive from this and \eqref{levin} that
\begin{equation*}
\sum_{n \le x} \bigl\vert \Lambda_k(n)\bigr\vert^2 \ll_k x \log^{2k-1}x,
\end{equation*}
moreover, by partial summation and \eqref{levin} we have
\begin{align*}
\sum_{n \le x} \frac{\Lambda_k(n)}{n} & = k \log^{k-1}x + o(\log^{k-1}x) + \int_1^x \frac{k\log^{k-1}t}{t}dt
\sim \log^k x.
\end{align*}
Finally, $\Lambda_k \in \mathscr{D}$ with $\beta_{\Lambda_k}(1) = k$, $\beta_{\Lambda_k}(0) = k-1$ and $\alpha_{\Lambda_k} \le 2k-1$.
We can use the main Theorem \ref{mainresult} with $f=1$, $g=\log^k$ and then proceeding with the same choice of $U$ and $V$ as in \cite{Sedunova20182} to obtain 
\begin{Teorema} \textit{ For each $k \in \N$, $\epsilon>0$ it holds
\begin{equation*}
\sum_{q \le Q} \frac{q}{\phi(q)} \sump_{\chi \bmod q} \max_{y \le x} \Bigl\vert \sum_{n \le y} \Lambda_k(n) \chi(n) \Bigr\vert \ll_k \bigl(x + x^{\frac{13}{14}+\epsilon}Q + x^{\frac{1}{2}}Q^2 \bigr) \log^{k+1}x.
\end{equation*}}
\end{Teorema}
This is clearly a generalization of Theorem \ref{BMVTsed}.

\section{Remark on hypothesis (H4)}
We remark that in the classic case it holds something stronger than \eqref{barb} as S. Graham has shown in \cite{Graham1978}. We too can assume a stronger hypothesis than \ref{H4}. \\
{\fontfamily{arabic}\selectfont
(\hspace{0.02cm}H\hspace{0.02cm}4')}  \textit{
For each $1 \le V_1 < V_2 $ it holds
\begin{align*}
\sum_{n=1}^V \bigl(\Gamma_1 \star f\bigr)(n) \bigl(\Gamma_2 \star f\bigr)(n) = V \log V_1 + O(V)
\end{align*} 
where
\begin{equation*}
\Gamma_i(b) = \begin{cases} \mu_f(b) \log \left( \frac{V_i}{b} \right) & b \le V_i, \\ 
0 & b > V_i.
\end{cases}
\end{equation*}}
This implies \ref{H4}. Indeed we consider the same $\eta$ as in \eqref{eta}, and observe that $\eta \cdot \mu_f = (\Gamma_2-\Gamma_1)/\log (V_2/V_1)$,
so we can write 
\begin{align*}
 \log^2 \Bigl( \frac{V_2}{V_1} \Bigr) \sum_{n=1}^V \Bigl\vert  \bigl((\mu_f \cdot \eta) \star f \bigr)(n) \Bigr\vert^2 & =  \sum_{n=1}^V (\Gamma_1 \star f)^2(n) + \sum_{n=1}^V (\Gamma_2 \star f)^2(n) \\ &
- 2\sum_{n=1}^V (\Gamma_1 \star f)(n) (\Gamma_2 \star f)(n).
\end{align*}
Now we apply three times (\hspace{0.02cm}H\hspace{0.02cm}4') to obtain
\begin{align*}
\log^2 \Bigl( \frac{V_2}{V_1} \Bigr) \sum_{n=1}^V \Bigl\vert \bigl((\mu_f \cdot \eta) \star f \bigr)(n) \Bigr\vert^2 & = V \log V_1 + V \log V_2 
- 2 V \log V_1 + O(V)
\end{align*}
and so \ref{H4}.
\subsection*{Acknowledgements}
The author is grateful to his supervisor Alessandro Zaccagnini for his help and his revision. 

\begin{small}
\bibliographystyle{plain}
\bibliography{bibtex}{}
\end{small}
\noindent Dipartimento di Scienze, Matematiche, Fisiche e Informatiche \\ \hspace{0.35cm} Università di Parma \\
\hspace{0.35cm} Parco Area delle Scienze 53/a \\
\hspace{0.35cm} 43124 Parma, Italia \\
\hspace{0.35cm} email (MF): {\fontfamily{qcr}\selectfont
matteo.ferrari14@studenti.unipr.it
} 

\end{document}